\edef\restoreparindent{\parindent=\the\parindent\relax}
\newtheorem{thm}{Theorem}[section]
\newtheorem{lem}[thm]{Lemma}
\newtheorem{cor}[thm]{Corollary}
\newtheorem{prop}[thm]{Proposition}
\newtheorem*{prob*}{Open problem}
\theoremstyle{definition}
\newtheorem{defi}[thm]{Definition}
\theoremstyle{remark}
\newtheorem{rem}[thm]{Remark}
\newtheorem*{rem*}{Remark}
\DeclareMathOperator{\Aut}{Aut}
\DeclareMathOperator{\ind}{ind}
\newcommand{\kringel}{\mathbin{\raise0.5pt\hbox{$\scriptstyle\circ$}}}
\newcommand{\pkt}{\mathbin{\raise0.5pt\hbox{$\scriptstyle\bullet$}}}
\newcommand{\sq}{\mathbin{\raise0.5pt\hbox{$\scriptscriptstyle\square$}}}
\newcommand{\R}{\mathbb{R}}
\newcommand{\Z}{\mathbb{Z}}
\newcommand{\ad}{{\rm ad}}
\newcommand{\Ad}{\mathop{\rm Ad}}
\newcommand{\Der}{{\rm Der}}
\newcommand{\Lf}{\mathfrak{f}}
\newcommand{\Lg}{\mathfrak{g}}
\newcommand{\Lz}{\mathfrak{z}}
\newcommand{\CF}{\mathcal{F}}
\newcommand{\abs}[1]{\lvert#1\rvert}
\newcommand{\al}{\alpha}
\newcommand{\be}{\beta}
\newcommand{\ga}{\gamma}
\newcommand{\de}{\delta}
\newcommand{\la}{\lambda}
\newcommand{\ov}{\overline}
\newcommand{\ra}{\rightarrow}
\renewcommand{\phi}{\varphi}
\newcommand{\Hpi}{\mathcal{H}_{\pi}}
\begin{document}


\title[Flat coadjoint orbits]{Characteristically nilpotent Lie groups \\ with flat coadjoint orbits}


\author[D. Burde]{Dietrich Burde}
\author[J. T. van Velthoven]{Jordy Timo van Velthoven}
\address{Fakult\"at f\"ur Mathematik\\
Universit\"at Wien\\
  Oskar-Morgenstern-Platz 1\\
  1090 Wien \\
  Austria}
\email{dietrich.burde@univie.ac.at}
\address{Fakult\"at f\"ur Mathematik\\
Universit\"at Wien\\
  Oskar-Morgenstern-Platz 1\\
  1090 Wien \\
  Austria}
\email{jordy.timo.van.velthoven@univie.ac.at}

\date{\today}

\subjclass[2000]{Primary 17A36, 22E27, Secondary 17B30}
\keywords{Characteristically nilpotent Lie algebras, dilations, flat coadjoint orbits}

\begin{abstract}
We study the existence of certain characteristically nilpotent Lie algebras with flat 
coadjoint orbits. Their connected, simply connected Lie groups admit square-integrable 
representations modulo the center. There are many examples of nilpotent Lie groups admitting 
families of dilations and square-integrable representations. Much less is known
about examples admitting square-integrable representations for which the quotient by the 
center does not admit a family of dilations. In this paper we construct a two-parameter 
family of characteristically nilpotent Lie groups $G(\al,\be)$ in dimension $11$, admitting 
square-integrable representations modulo the center $Z$, such that $G(\al,\be)/Z$ does 
not admit a family of dilations.  
\end{abstract}

\maketitle

\section{Introduction}
Two classes of nilpotent Lie algebras that have been studied in different areas of mathematics
are the so-called \emph{characteristically nilpotent Lie algebras} and the \emph{Lie algebras admitting an expanding automorphism}.
Characteristically nilpotent Lie algebras are Lie algebras for which all derivations are nilpotent. Such Lie algebras are
nilpotent. They have been studied in various contexts. Among other things, they 
provide examples of nilpotent Lie groups not admitting left-invariant affine structures, see, e.g., \cite{BU5, MIL}.
On the other hand, Lie algebras admitting an expanding automorphism, i.e., an automorphism all whose eigenvalues
$\lambda$ satisfy $|\lambda|>1$,  are of importance in harmonic analysis as they provide nilpotent Lie groups admitting
families of dilations, so-called \emph{homogeneous groups} \cite{FST}. The classes of characteristically nilpotent Lie algebras
and Lie algebras admitting expanding automorphisms are well-known to be disjoint from each other
(see Section \ref{sec:automorphims}). As a matter of fact, one of the first examples of a nilpotent Lie algebra
not admitting an expanding automorphism was a characteristically nilpotent
Lie algebra \cite{DYE}; see also \cite{dixmier1957derivations}. 

In the present paper we study the existence of certain characteristically nilpotent Lie algebras with flat coadjoint orbits.
Nilpotent Lie algebras with flat coadjoint orbits form a key object of study in the representation theory of nilpotent
Lie groups \cite{COG} as they characterize precisely those Lie algebras whose connected, simply connected nilpotent Lie groups
admit square-integrable representations modulo the center (see Section \ref{sec:squareintegrable}).
Whereas nilpotent Lie groups admitting both families of dilations and square-integrable representations exist in
abundance (see, e.g., \cite{corwin1983criteria}), there appears to be little known in the literature about the
existence of nilpotent Lie groups admitting square-integrable representations but not a family of dilations.
In particular, examples of characteristically nilpotent Lie groups $G$ with square-integrable
representations modulo the center $Z$ for which the quotient $G/Z$ is again characteristically nilpotent appear to be rare, see Remark \ref{rem}.
The existence of such examples is, however, of interest in view of recent papers
\cite{oussa2024orthonormal, grochenig2018orthonormal, grochenig2020balian} on orthonormal bases in the orbit of
square-integrable representations of nilpotent Lie groups as we explain next.

In \cite{grochenig2018orthonormal}, it was asked, given an irreducible, square-integrable representation
$(\pi, \Hpi)$ of a connected, simply connected nilpotent Lie group $G$, whether there always exists a
vector $\eta \in \Hpi$ and a discrete set $\Gamma \subseteq G$ such that $\{ \pi (\gamma) \eta \}_{\gamma \in \Gamma}$ forms an
orthonormal basis for $\Hpi$. This question was solved affirmatively in \cite{grochenig2018orthonormal} for graded
Lie groups with a one-dimensional center and was solved in \cite{oussa2024orthonormal} for arbitrary connected,
simply connected nilpotent Lie groups. It should be mentioned that the contruction of orthonormal bases
$\{ \pi (\gamma) \eta \}_{\gamma \in \Gamma}$ actually happens in $G/Z$, in the sense that if
$\{ \pi (\gamma) \eta \}_{\gamma \in \Gamma}$ is an orthonormal basis for $\Hpi$, then so is
$\{ \pi (\gamma \zeta_{\gamma} ) \eta \}_{\gamma \in \Gamma}$ for any sequence
$\{\zeta_{\gamma} \}_{\gamma \in \Gamma}$ in $Z$. Therefore, the investigation of orthonormal bases in the orbit of $G$
under $\pi$ is the same as investigating orthonormal bases in the orbit of associated projective representations
of $G/Z$ (see Section \ref{sec:squareintegrable}). For such projective representations, it was shown in
\cite{grochenig2020balian} that, if  the nilpotent group $G/Z$ admits a family of dilations, then
orthonormal bases $\{ \pi (\gamma) \eta \}_{\gamma \in \Gamma}$ for $\Hpi$ cannot consist of smooth vectors, that is, the
map $x \mapsto \pi(x) \eta$ cannot be smooth.
Although it is expected that smooth orthonormal bases do not exist for general connected, simply connected nilpotent
Lie groups (see \cite{grochenig2020balian, VV}), it appears that relevant examples of groups $G$ admitting
square-integrable representations but such that $G/Z$ does \emph{not} admit a family of dilations are rare in the literature. The aim of the present note is to construct a family of such examples. More precisely, we will
show the following theorem.

\begin{thm} \label{thm:intro}
There exists a two-parameter family of characteristically nilpotent Lie groups $G(\al,\be)$
in dimension $11$, admitting square-integrable representations modulo the center $Z$, such that
$G(\al,\be)/Z$ is characteristically nilpotent, and hence does not admit a family of dilations.
\end{thm}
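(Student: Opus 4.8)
The plan is to produce the algebras $\Lg(\al,\be)$ by explicit structure constants over $\R$. Fix a basis $e_1,\dots,e_{11}$ and write down a table of nonzero brackets in which $e_{11}$ spans the center, the lower central series has the shortest steps compatible with $\dim Z=1$, and the two parameters $\al,\be$ enter only in the brackets landing in $[\Lg,\Lg]$. First I would impose the Jacobi identity on this table; this produces a handful of polynomial relations in $\al,\be$, and I would check that they hold on a non-empty Zariski-open set (generically something like $\al\be(\al-\be)\neq 0$ after a normalization), which is the ``two-parameter family''. I would then compute the lower central series of $\Lg(\al,\be)$ and its centralizer to confirm that $Z(\Lg(\al,\be))=\R e_{11}=:\Lz$, so that $\Lg(\al,\be)/\Lz$ has dimension $10$; passing to the associated connected, simply connected nilpotent Lie group $G(\al,\be)$ then transports everything below to the group level.

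For square-integrability modulo the center I would invoke the Moore--Wolf criterion recalled in Section~\ref{sec:squareintegrable}: $G(\al,\be)$ admits square-integrable representations modulo $Z$ (equivalently, has flat coadjoint orbits) if and only if there is $\ell\in\Lg(\al,\be)^*$ for which the alternating form $B_\ell(x,y):=\ell([x,y])$ has radical exactly $\Lz$. Taking $\ell=e_{11}^*$, I would write down the $11\times 11$ Gram matrix of $B_{e_{11}^*}$ in the chosen basis, check that its kernel is precisely $\R e_{11}$, and compute the $10\times 10$ minor corresponding to $\Lg(\al,\be)/\Lz$; on the admissible locus its determinant is a nonzero monomial in $\al,\be$, so $B_{e_{11}^*}$ descends to a symplectic form on $\Lg(\al,\be)/\Lz$ and the coadjoint orbit of $e_{11}^*$ is flat.

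The substance of the proof is to show that both $\Lg:=\Lg(\al,\be)$ and $\ov{\Lg}:=\Lg(\al,\be)/\Lz$ are characteristically nilpotent. For each I would compute the derivation algebra directly: writing $D=(d_{ij})$ in the chosen basis and imposing $D[e_i,e_j]=[De_i,e_j]+[e_i,De_j]$ over all pairs $i<j$ gives a homogeneous linear system whose solution space is $\Der(\Lg)$, respectively $\Der(\ov{\Lg})$. I would then verify that in every solution the ``diagonal part'' of $D$ with respect to a basis ordering adapted to the lower central series vanishes---equivalently, $\Der(\Lg)$ is a nilpotent Lie algebra acting by nilpotent operators and so contains no nonzero semisimple element---so that $\Lg$ is characteristically nilpotent; the parameter conditions on $(\al,\be)$ are precisely what forces the vanishing of the ``weight'' (grading) derivation that a more symmetric bracket table would carry. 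Carrying out the identical computation on the $10$-dimensional $\ov{\Lg}$ shows that $\ov{\Lg}$ is characteristically nilpotent as well. Finally, a family of dilations on $G(\al,\be)/Z$ would supply a diagonalizable derivation of $\ov{\Lg}$ with strictly positive eigenvalues, in particular a non-nilpotent one, which is impossible; see Section~\ref{sec:automorphims}. This proves the theorem.

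The main obstacle is meeting all three requirements at once: characteristic nilpotency is \emph{not} inherited by quotients, so the bracket table and the admissible range of $(\al,\be)$ must be engineered so that neither $\Der(\Lg)$ nor $\Der(\ov{\Lg})$ acquires a torus, while simultaneously $B_{e_{11}^*}$ remains nondegenerate on $\Lg/\Lz$. In practice I would start from a low-dimensional characteristically nilpotent (quasi-)filiform algebra whose central quotient is again characteristically nilpotent, build an $11$-dimensional extension with $1$-dimensional center carrying a symplectic $B_\ell$, and only then introduce the second parameter by a derivation-free deformation, checking at the end that the two derivation computations---sizeable but mechanical in dimensions $10$ and $11$---still return only nilpotent derivations.
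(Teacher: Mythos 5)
Your strategy coincides with the paper's: construct an explicit two-parameter family of $11$-dimensional filiform algebras with an adapted basis $e_1,\dots,e_{11}$ and center $\R e_{11}$, verify the flat-orbit condition $\Lg(\ell)=\Lz$ for $\ell=e_{11}^*$ by solving the linear system defining the radical of $B_\ell$, and establish characteristic nilpotency of both $\Lg(\al,\be)$ and $\Lg(\al,\be)/\Lz$ by computing the full derivation algebra and checking that every derivation is nilpotent (strictly lower triangular in the adapted basis). The reduction from the group statement to these Lie-algebra statements via Lemma \ref{lem:flat_orbit} and Corollary \ref{2.7} is also the same.

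However, there is a genuine gap: the theorem is an existence statement, and its entire content lies in actually exhibiting structure constants that \emph{simultaneously} satisfy the Jacobi identity, make $B_{e_{11}^*}$ nondegenerate modulo the center, and force both $\Der(\Lg)$ and $\Der(\Lg/\Lz)$ to consist of nilpotent operators. Your proposal describes how one would verify these properties for a bracket table but never produces the table, and you yourself identify the hard part (characteristic nilpotency is not inherited by quotients, so the deformation must be engineered so that neither derivation algebra acquires a semisimple element) without resolving it. The paper resolves it by writing down a specific family in which the Jacobi identity holds identically in $(\al,\be)$ --- no Zariski-open restriction on the parameters is needed, contrary to what you anticipate; the only excluded point is $(\al,\be)=(0,0)$, where the quotient acquires the diagonal derivation $\diag(1,2,\dots,10)$ --- and by listing an explicit $12$-dimensional basis of $\Der(\Lg/\Lz)$ of strictly lower-triangular matrices. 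Until the concrete brackets are supplied and these computations carried out, the proposal is a correct roadmap rather than a proof.
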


A family of examples yielding Theorem \ref{thm:intro} is explicitly given in Section \ref{sec:example}. It consists
of so-called \emph{filiform} nilpotent Lie groups, meaning that the nilpotency index is maximal with respect to the dimension.
There are various obstructions to constructing such examples with the properties asserted in Theorem \ref{thm:intro}.
First, if $G$ admits a square-integrable representation modulo its center $Z$, then $G/Z$ is necessarily even-dimensional.
Since a filiform nilpotent Lie group has a one-dimensional center, this means that $G$ needs to have an odd dimension.
Second, if $\dim(G) \leq 7$, then $G/Z$ cannot be characteristically nilpotent, so that necessarily $\dim(G) \geq 9$
whenever $G$ admits square-integrable representations and $G/Z$ is characteristically nilpotent.
To find two-parameter families of Lie groups with such properties, it is more convenient to require that $\dim(G) \geq 11$ as
this allows for the construction of examples for which the required properties are easier to verify.

It should be noted that one could construct many more filiform nilpotent examples in any odd dimension $n\ge 11$ with the properties asserted in Theorem \ref{thm:intro}.
In fact, the filiform nilpotent Lie algebras $\Lf_n$ for $n\ge 13$, defined in section $5$ of \cite{BU35}, and their Lie groups
are good candidates for it. We expect that all Lie algebras $\Lf_n$ and their quotients $\Lf_n/\Lz$ are characteristically
nilpotent. 

For the variety $\CF_n$ of filiform nilpotent Lie algebras of dimension $n$ with $n\ge 7$ it is known, that
every irreducible component contains a nonempty Zariski open dense subset consisting of characteristically nilpotent
Lie algebras. For a reference see Theorem $11$ in \cite{ANC}. In other words, characteristically nilpotent filiform
Lie algebras are quite common. 

The organization of this paper is as follows. Section \ref{sec:automorphims} collects various notions and facts on automorphisms,
gradings and characteristically nilpotent Lie algebras. Section \ref{sec:squareintegrable} provides the relevant
background on square-integrable representations of nilpotent Lie groups. The flat coadjoint orbit condition is explained.
In Section \ref{sec:example} we construct the family of filiform nilpotent Lie groups
and Lie algebras for Theorem \ref{thm:intro}, and prove our main result on the level of Lie algebras and their
derivation algebras.

\section{Automorphisms and gradings} \label{sec:automorphims}

We will assume that all Lie groups and Lie algebras are finite-dimensional over the field 
of real numbers. We recall some notions and results that have appeared in different contexts for nilpotent
Lie groups and Lie algebras.

\begin{defi}
An {\em $\R$-grading} on a Lie algebra $\Lg$ is a decomposition of $\Lg$ as a direct sum
\[
\Lg=\bigoplus_{r\in \R}\Lg_r 
\]
of subspaces $\Lg_r\subseteq \Lg$ such that $[\Lg_r,\Lg_s]\subseteq \Lg_{r+s}$ for all $r,s\in \R$.
Here almost all of the spaces $\Lg_r$ are zero. We call such an $\R$-grading {\em positive}, if
$\Lg_r=0$ for all $r\le 0$.
\end{defi}  

In an analogous way one defines a $\Z$-grading on $\Lg$. 
Note that an automorphism $\phi\in \Aut(\Lg)$ preserves a grading if $\phi(\Lg_r)=\Lg_r$ for every $r$.
We call an automorphism $\phi\in \Aut(\Lg)$ {\em expanding}, if all eigenvalues $\la_i$ have absolute value
$\abs{\la_i}>1$. Every Lie algebra $\Lg$ admits the {\em trivial grading}, which is given by $\Lg=\Lg_0$.

\begin{defi}
A Lie algebra $\Lg$ is called {\em characteristically nilpotent}, if all derivations $D\in \Der(\Lg)$ are
nilpotent.  
\end{defi}  

In particular, a characteristically nilpotent Lie algebra $\Lg$ has only nilpotent inner derivations
$D=\ad(x)$ for all $x\in \Lg$, and hence is nilpotent by Engel's theorem. Moreover the derivation algebra $\Der(\Lg)$
of a characteristically nilpotent Lie algebra $\Lg$ is nilpotent.

The following lemma is stated in Dyer's paper \cite{DYE}, but without a proof.
Since we could not find a proof in the literature, we will give one here.

\begin{lem}\label{2.3}
Let $\Lg$ be a characteristically nilpotent Lie algebra.  Then the eigenvalues of any automorphism in $\Aut(\Lg)$ are
roots of unity.
\end{lem}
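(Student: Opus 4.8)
The plan is to analyse the real linear algebraic group $\Aut(\Lg)\subseteq\mathrm{GL}(\Lg)$, whose Lie algebra is $\Der(\Lg)$. The first step is to record that, since $\Lg$ is characteristically nilpotent, \emph{every} element of $\Der(\Lg)$ is a nilpotent endomorphism of the vector space $\Lg$. By Engel's theorem there is then a complete flag $0=V_0\subset V_1\subset\cdots\subset V_n=\Lg$ stable under $\Der(\Lg)$ and such that $\Der(\Lg)$ acts as zero on each line $V_i/V_{i-1}$. I would deduce that the identity component $\Aut(\Lg)^{\circ}$ stabilises this flag (its stabiliser is closed and contains $\Der(\Lg)$ in its Lie algebra) and acts trivially on each $V_i/V_{i-1}$ (the induced action of the connected group has vanishing differential, hence is trivial). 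Consequently every element of $\Aut(\Lg)^{\circ}$ is a unipotent operator on $\Lg$, so all of its eigenvalues equal $1$.

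The second step is a component count: $\Aut(\Lg)$, being a real algebraic group, has only finitely many connected components, say $N=\abs{\Aut(\Lg)/\Aut(\Lg)^{\circ}}$. Then for any $\phi\in\Aut(\Lg)$ the power $\phi^{N}$ lies in $\Aut(\Lg)^{\circ}$, hence is unipotent, so every complex eigenvalue $\la$ of $\phi$ satisfies $\la^{N}=1$; that is, the eigenvalues of $\phi$ are $N$-th roots of unity. This proves the lemma.

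Alternatively, one can bypass the component count via the multiplicative Jordan decomposition $\phi=\phi_s\phi_u$ inside $\Aut(\Lg)$, with $\phi_s$ semisimple, $\phi_u$ unipotent, and $\phi_s\phi_u=\phi_u\phi_s$. Since $\phi_u$ contributes only the eigenvalue $1$ and commutes with $\phi_s$, the eigenvalues of $\phi$ are exactly those of $\phi_s$, so it suffices to show that $\phi_s$ has finite order. The Zariski closure $H$ of $\langle\phi_s\rangle$ is diagonalisable, so its Lie algebra consists of semisimple endomorphisms of $\Lg$; but $\operatorname{Lie}(H)\subseteq\Der(\Lg)$ consists of nilpotent endomorphisms, forcing $\operatorname{Lie}(H)=0$ and hence $H$ finite. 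Thus $\phi_s$ has finite order and the eigenvalues of $\phi$ are roots of unity.

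The main obstacle, and the only place where external input is really needed, is the first step: passing from ``$\Der(\Lg)$ consists of nilpotent operators'' to ``$\Aut(\Lg)^{\circ}$ consists of unipotent operators''. This rests on the identification $\operatorname{Lie}(\Aut(\Lg))=\Der(\Lg)$ together with Engel's theorem (equivalently, Kolchin's theorem that a connected algebraic subgroup of $\mathrm{GL}(V)$ whose Lie algebra acts by nilpotent operators is unipotent). A minor point is that ``finitely many components'' should be understood in the Euclidean topology on the real points, which is still valid by Whitney's finiteness theorem for real algebraic sets; the Jordan-decomposition variant avoids this issue entirely.
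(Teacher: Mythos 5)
Your main argument is correct and is essentially the paper's proof: both show that every element of $\Aut(\Lg)^{\circ}$ is unipotent and then use that $\Aut(\Lg)^{\circ}$ has finite index in the real algebraic group $\Aut(\Lg)$ to conclude $\la^k=1$. The only difference in that route is how unipotence of the identity component is obtained — you invoke Engel/Kolchin via a stable flag, while the paper notes that $\exp$ is a bijection from the nilpotent Lie algebra $\Der(\Lg)$ onto $\Aut(\Lg)^{\circ}$, so each element is $\exp(D)$ with $D$ nilpotent; these are interchangeable. Your alternative via the multiplicative Jordan decomposition is a genuinely different and slightly slicker route: it trades the component count for the fact that the Zariski closure of $\langle\phi_s\rangle$ is diagonalisable with Lie algebra inside $\Der(\Lg)$, hence trivial, forcing $\phi_s$ to have finite order — this sidesteps the (true but nontrivial) finiteness of the Euclidean component group of the real points.
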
  

\begin{proof}
Denote by $\Aut(\Lg)$ the automorphism group of $\Lg$, and by  $\Aut(\Lg)^0$ the connected component of the identity.
Since $\Aut(\Lg)$ is a linear algebraic group, it has only finitely many components in
the standard topology. Hence $\Aut(\Lg)^0$ has finite index in $\Aut(\Lg)$. The Lie algebra of both $\Aut(\Lg)$ and
$\Aut(\Lg)^0$ is given by the derivation algebra $\Der(\Lg)$. The exponential map determines a bijection between $\Der(\Lg)$
and $\Aut(\Lg)^0$ as $\Der(\Lg)$ is nilpotent. Since $\Lg$ is characteristically nilpotent, every derivation in
$\Der(\Lg)$ is nilpotent, so that every automorphism in $\Aut(\Lg)^0$ will only have eigenvalues equal to $1$.
Now let $\phi \in \Aut(\Lg)$. Since
$\Aut(\Lg)^0$ has finite index in $\Aut(\Lg)$, there exists an integer $k>0$ such that $\phi^k\in \Aut(\Lg)^0$. For any eigenvalue
$\la$ of $\phi$ it follows that $\la^k=1$.
\end{proof}  

For the following result see Lemma $2$ in \cite{BU9}.

\begin{lem}\label{2.4}
Let $\Lg$ be a characteristically nilpotent Lie algebra. Then the only $\Z$-grading on $\Lg$ is the trivial grading.
\end{lem}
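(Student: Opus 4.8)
The plan is to turn a $\Z$-grading into a diagonalizable derivation and then play it off against characteristic nilpotency. So suppose $\Lg=\bigoplus_{r\in\Z}\Lg_r$ is a $\Z$-grading, and define a linear map $D\colon\Lg\to\Lg$ by setting $D|_{\Lg_r}=r\cdot\id_{\Lg_r}$ and extending linearly, i.e. $D(x)=rx$ for $x\in\Lg_r$. The first step is to verify that $D\in\Der(\Lg)$: for $x\in\Lg_r$ and $y\in\Lg_s$ the grading condition $[\Lg_r,\Lg_s]\subseteq\Lg_{r+s}$ gives $D([x,y])=(r+s)[x,y]=[rx,y]+[x,sy]=[D(x),y]+[x,D(y)]$, and since elements of the form $x\in\Lg_r$, $y\in\Lg_s$ span $\Lg$, the Leibniz identity holds on all of $\Lg$.

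Next I would observe that $D$ is semisimple: by construction the subspaces $\Lg_r$ are eigenspaces of $D$ with eigenvalue $r$, and they span $\Lg$, so $D$ is diagonalizable (over $\Z$, in particular over $\R$). On the other hand, $\Lg$ is characteristically nilpotent, so every derivation is nilpotent; in particular $D$ is nilpotent. An endomorphism that is simultaneously semisimple and nilpotent is zero, hence $D=0$. Consequently $r=0$ on every $\Lg_r\neq 0$, which forces $\Lg_r=0$ for all $r\neq 0$, i.e. $\Lg=\Lg_0$ and the grading is trivial.

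I do not expect a genuine obstacle here: the content is entirely the two routine checks above — that $D$ is a derivation, and that a diagonalizable nilpotent operator vanishes. (One could instead exponentiate $D$ to a one-parameter subgroup of $\Aut(\Lg)$ and apply Lemma \ref{2.3}, but routing through the derivation algebra is shorter and avoids any appeal to the structure of $\Aut(\Lg)$.) It is worth noting that the same argument shows more: a characteristically nilpotent Lie algebra admits no nonzero semisimple derivation, and hence no nontrivial $\R$-grading either, since an $\R$-grading with grading group generated by finitely many reals can be rescaled to a $\Z$-grading, or more directly produces a semisimple derivation by the identical construction.
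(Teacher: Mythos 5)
Your proof is correct, but it takes a different route from the paper. The paper turns the $\Z$-grading into the automorphism $\phi(x)=2^ix$ for $x\in\Lg_i$ and then invokes Lemma \ref{2.3}, which says that every automorphism of a characteristically nilpotent Lie algebra has only roots of unity as eigenvalues; since $2^i=1$ forces $i=0$, the grading is trivial. You instead build the diagonalizable derivation $D(x)=rx$ for $x\in\Lg_r$ and observe that a derivation which is both semisimple and (by characteristic nilpotency) nilpotent must vanish. Your argument is more elementary: it stays entirely inside $\Der(\Lg)$ and avoids Lemma \ref{2.3}, whose proof relies on the algebraic-group structure of $\Aut(\Lg)$ (finitely many components, the exponential bijection $\Der(\Lg)\to\Aut(\Lg)^0$). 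In fact the paper itself records your construction immediately after the lemma, as the remark that a nontrivial $\Z$-grading yields a nonnilpotent derivation $D(x)=ix$ --- your proof is precisely the contrapositive of that remark, made self-contained. What the paper's route buys is that Lemma \ref{2.3} is a strictly stronger statement about all automorphisms, which the authors want on record anyway (it is the quoted result of Dyer); once it is available, the grading lemma is a one-line corollary. One small caveat on your closing aside: the claim that an $\R$-grading ``can be rescaled to a $\Z$-grading'' is not right in general, since the subgroup of $\R$ generated by the occurring weights need not be cyclic; but your alternative justification --- that the identical construction $D(x)=rx$ already gives a semisimple derivation for any $\R$-grading --- is correct and is all that is needed.
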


\begin{proof}
Suppose that
\[
 \Lg=\bigoplus_{i\in \Z}\Lg_i 
\]
is a grading. Consider the bijective linear map $\phi\colon \Lg\ra \Lg$ defined by $\phi(x)=2^ix$ for all
$x\in \Lg_i$. The definition of a grading implies that $\phi$ is a Lie algebra automorphism of $\Lg$. Since the real eigenvalues
are equal to $1$ by Lemma $\ref{2.3}$, we have $i=0$ and  $\Lg=\Lg_0$.
\end{proof}  

Note that the converse statement also holds. If $\Lg$ admits a nontrivial $\Z$-grading, then the linear map $D:\Lg\ra \Lg$
defined by $D(x)=ix$ for all $x\in \Lg_i$ is a nonnilpotent derivation. 

Following \cite{FST}, we define dilations as follows.

\begin{defi}
A Lie algebra $\Lg$ is said to admit a {\em family of dilations}, if there exists a one-parameter family $(\de_t)_{t>0}$
of automorphisms $\de_t\in \Aut(\Lg)$ of the form
\[
\de_t=\exp(A\log(t)), \quad t>0,
\]
for a diagonalizable linear operator $A\colon \Lg\ra \Lg$ with positive eigenvalues.
\end{defi}  

The following result relates the various notions introduced above. It can be found in the literature among different references.
We summarize it here for the convenience of the reader.

\begin{prop}\label{2.6}
Let $\Lg$ be a Lie algebra. Then the following statements are equivalent.
\begin{itemize}
\item[(1)] $\Lg$ admits an expanding automorphism. 
\item[(2)] $\Lg$ admits a nontrivial positive $\R$-grading.
\item[(3)] $\Lg$ admits a nontrivial positive $\Z$-grading.
\item[(4)] $\Lg$ admits a family of dilations.
\end{itemize}  
If one of the conditions is satisfied then $\Lg$ is nilpotent.
\end{prop}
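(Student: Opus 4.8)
The plan is to prove the cycle $(2)\Rightarrow(1)\Rightarrow(3)\Rightarrow(2)$ together with the equivalence $(2)\Leftrightarrow(4)$, and then to read off nilpotency from a positive grading. Three of these steps are routine. For $(2)\Rightarrow(1)$, given a nontrivial positive grading $\Lg=\bigoplus_{r>0}\Lg_r$, I would fix $t_0>1$ and let $\phi$ act on $\Lg_r$ by $t_0^r\cdot\id$; the condition $[\Lg_r,\Lg_s]\subseteq\Lg_{r+s}$ makes $\phi\in\Aut(\Lg)$, and all of its eigenvalues have the form $t_0^r$ with $r>0$, hence $\phi$ is expanding. The implication $(3)\Rightarrow(2)$ is immediate, since a $\Z$-grading is in particular an $\R$-grading and positivity is inherited. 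For $(2)\Leftrightarrow(4)$: from a positive grading define the diagonalizable operator $A$ by $A|_{\Lg_r}=r\cdot\id$, which has positive eigenvalues and for which $\exp(A\log t)$ acts on $\Lg_r$ as $t^r\cdot\id$, an automorphism by the grading property; conversely, given dilations $\de_t=\exp(A\log t)$ with $A$ diagonalizable and with positive spectrum, set $\Lg_r:=\ker(A-r\,\id)$, so that $\Lg=\bigoplus_{r>0}\Lg_r$, and for $x\in\Lg_r$, $y\in\Lg_s$ one compares $\de_t[x,y]=[\de_t x,\de_t y]=t^{r+s}[x,y]$ with the expansion $\de_t[x,y]=\sum_q t^q w_q$ of $[x,y]=\sum_q w_q$, where $w_q\in\Lg_q$; linear independence of the functions $t\mapsto t^q$ forces $w_q=0$ for $q\neq r+s$, i.e.\ $[\Lg_r,\Lg_s]\subseteq\Lg_{r+s}$. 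In both directions the resulting grading is nontrivial precisely when $\Lg\neq 0$.

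The one substantial step is $(1)\Rightarrow(3)$, and this is where I would lean on the literature. First one observes that an expanding automorphism $\phi$ forces $\Lg$ to be nilpotent: over $\C$, the generalized eigenspace decomposition $\Lg_{\C}=\bigoplus_{\la}\Lg_{\la}$ of $\phi$ satisfies $[\Lg_{\la},\Lg_{\mu}]\subseteq\Lg_{\la\mu}$, so every term $\Lg^k$ of the lower central series is $\phi$-invariant and its eigenvalues lie among $k$-fold products of eigenvalues of $\phi$; writing $c:=\min_{\la}\abs{\la}>1$ and $C:=\max_{\la}\abs{\la}$, any such eigenvalue has modulus $\geq c^k$, while it is also an eigenvalue of $\phi$ on $\Lg$ and so has modulus $\leq C$, whence $\Lg^k=0$ as soon as $c^k>C$. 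With nilpotency in hand, the passage from an expanding automorphism to an actual positive $\Z$-grading is a theorem of Der\'e (building on work of Dekimpe--Der\'e): a nilpotent Lie algebra admits an expanding automorphism if and only if it admits a positive grading. I would cite this result rather than reprove it. Its proof first produces a positive $\R$-grading from the moduli $\log\abs{\la}$ of the eigenvalues of $\phi$ and then has to \emph{rationalize} this into a $\Q$-grading (hence, after scaling, a $\Z$-grading); this rationalization is the genuine obstacle, since it is not a formal consequence of having a real grading and requires analyzing which gradings $\Lg$ can carry.

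Finally, the assertion that any one of $(1)$--$(4)$ implies nilpotency follows from the equivalences together with one short argument applied to $(2)$: if $\Lg=\bigoplus_{r>0}\Lg_r$ is a nontrivial positive $\R$-grading and $r_0>0$ is the smallest index with $\Lg_{r_0}\neq 0$, then an easy induction gives $\Lg^k\subseteq\bigoplus_{r\geq kr_0}\Lg_r$, and the right-hand side vanishes once $k$ is large, because only finitely many $\Lg_r$ are nonzero; hence $\Lg$ is nilpotent. So the only place where real work is needed is $(1)\Rightarrow(3)$ (which in particular subsumes the non-obvious implication $(2)\Rightarrow(3)$ that a real positive grading yields an integer one); everything else is bookkeeping with eigenspace decompositions.
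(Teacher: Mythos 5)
Your proposal is correct and follows essentially the same route as the paper: the dilation--grading equivalences are handled by the same direct eigenspace computations, and the one genuinely hard step (producing a positive $\Z$-grading, i.e.\ the rationalization) is delegated to the Dekimpe--Lee/Der\'e literature, just as the paper cites \cite{DEL} and \cite{DEI} for $(1)\Leftrightarrow(3)$ and $(2)\Rightarrow(3)$. The only differences are organizational (you close the cycle via $(2)\Rightarrow(1)$ instead of citing $(2)\Rightarrow(3)$ separately) and that you prove nilpotency directly rather than citing it, which is a minor bonus.
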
  

\begin{proof}
By Theorem $2.7$ in \cite{DEL}, $(1)$ is equivalent to $(3)$. In addition, it is shown in Proposition $2.9$ of \cite{DEL},
that   $(1)$ implies that $\Lg$ is nilpotent. Hence it suffices to prove the equivalence of $(2),(3)$ and $(4)$. 
Assume that $(4)$ holds and we have a family of dilations $(\de_t)_{t>0}$ with $\de_t=\exp(A\log(t))$.
Then we obtain a decomposition $\Lg=\Lg_1\oplus \cdots \oplus \Lg_k$, where $\Lg_i$ denotes
the eigenspace of $A$ corresponding to the $i$-th positive eigenvalue $r_i$ of $A$. For every $r>0$ different from the eigenvalues
of $A$ set $\Lg_r=0$. Then $\de_t(x)=t^rx$ for any $x\in \Lg_r$. Let $x\in \Lg_r,y\in \Lg_s$ with $r,s>0$. We have
\[
\de_t([x,y])=[\de_t(x),\de_t(y)]=t^{r+s}[x,y],
\]
which implies $[\Lg_r,\Lg_s]\subseteq \Lg_{r+s}$. Thus $\Lg$ admits a positive $\R$-grading and $(2)$ holds.
By Proposition $2.6$ in \cite{DEI} condition $(2)$ implies condition $(3)$; see also \cite{miller1980parametrices} for another
proof that (4) implies (3). Finally suppose that $(3)$ holds, and $\Lg=\oplus_{i\in \Z} \Lg_i$
is a positive $\Z$-grading. Define the linear operator $A$ by $Ax=ix$ for $x\in \Lg_i$. This yields a family of dilations
$(\de_t)_{t>0}$ given by $\de_t=\exp(A\log(t))$ for $t>0$, and $(4)$ is satisfied.
\end{proof}  

\begin{cor}\label{2.7}
Let $\Lg$ be a characteristically nilpotent Lie algebra. Then $\Lg$ does not admit a family of dilations.
\end{cor}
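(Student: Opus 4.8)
The plan is to obtain this as an immediate corollary of the preceding results, and I would offer two essentially equivalent routes.

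The first route combines Proposition~\ref{2.6} with Lemma~\ref{2.4}. Indeed, if $\Lg$ were to admit a family of dilations, then by the equivalence of conditions $(3)$ and $(4)$ in Proposition~\ref{2.6} it would admit a nontrivial positive $\Z$-grading; this is in particular a nontrivial $\Z$-grading on $\Lg$, which is ruled out by Lemma~\ref{2.4}. Hence no such family of dilations can exist.

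The second route is more self-contained and does not invoke the grading statements. Suppose $(\de_t)_{t>0}$ is a family of dilations, so that $\de_t=\exp(A\log(t))$ for some diagonalizable operator $A\colon\Lg\ra\Lg$ with positive eigenvalues. Substituting $t=e^s$ and differentiating the identity $\de_{e^s}([x,y])=[\de_{e^s}(x),\de_{e^s}(y)]$ at $s=0$ yields $A[x,y]=[Ax,y]+[x,Ay]$ for all $x,y\in\Lg$, so that $A\in\Der(\Lg)$. But $A$ is diagonalizable with strictly positive eigenvalues, hence invertible and therefore not nilpotent, which contradicts the assumption that $\Lg$ is characteristically nilpotent.

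There is no genuine obstacle here: the statement is a direct consequence of the material already developed, and in an actual write-up one would likely just cite Lemma~\ref{2.4} and Proposition~\ref{2.6} in a single line. The only point requiring a moment's care is the standard fact, used in the second route, that the infinitesimal generator of a one-parameter group of Lie algebra automorphisms is a derivation.
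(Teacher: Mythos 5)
Your first route is exactly the paper's proof: it cites Lemma~\ref{2.4} to rule out nontrivial $\Z$-gradings and then invokes the equivalence of conditions $(3)$ and $(4)$ in Proposition~\ref{2.6}, so that part is correct and identical in approach. Your second route is a genuinely different and more self-contained argument that the paper does not use: you differentiate the one-parameter group $\de_{e^s}=\exp(As)$ at $s=0$ to conclude that the generator $A$ is a derivation, and then observe that a diagonalizable operator with strictly positive eigenvalues is invertible and hence non-nilpotent, contradicting characteristic nilpotency. This second argument is correct and has the advantage of bypassing the grading machinery entirely; it is essentially the reverse direction of the implication $(4)\Rightarrow(2)$ in Proposition~\ref{2.6}, localized to the single fact you need. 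What the paper's route buys in exchange is uniformity: having already assembled Lemma~\ref{2.4} and Proposition~\ref{2.6}, the corollary follows in one line, and the grading language is reused elsewhere (e.g.\ in the remark following Lemma~\ref{2.4} and in Section~\ref{sec:example}). Either write-up would be acceptable; if you use the second route, you should include the short verification that the infinitesimal generator of a one-parameter group of automorphisms is a derivation, as you indicate.
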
  

\begin{proof}
By Lemma \ref{2.4}, a characteristically nilpotent Lie algebra only admits the trivial $\Z$-grading. Hence, the claim follows from Proposition \ref{2.6}.
\end{proof}

Let $G$ be a connected, simply connected nilpotent Lie group with Lie algebra $\Lg$.
Then $\exp\colon \Lg\ra G$ is a diffeomorphism, and given a family of dilations $(\de_t)_{t>0}$ on $\Lg$ we define
a corresponding family of dilations on $G$ by $\exp\circ \de_t\circ \exp^{-1}$. Then we say that
{\em $G$ admits a family of dilations}, if $\Lg$ does admit one. Lie groups admitting a family of dilations are often called
{\em homogeneous groups}, see \cite{FST}.

\begin{rem}
The positivity of the grading in Proposition $\ref{2.6}$ is needed for the existence of a family of dilations.
It is worth pointing out that the following conditions are equivalent, see \cite{DEL}, Theorem $2.7$:
\begin{itemize}
\item[(i)] $\Lg$ admits an $\R$-grading with $\Lg_0=0$.
\item[(ii)] $\Lg$ admits a $\Z$-grading with $\Lg_0=0$.
\item[(iii)] $\Lg$ admits either an expanding or a hyperbolic automorphism, or both.
\end{itemize}
Here an automorphism of $\Lg$ without eigenvalues of modulus $1$ is called {\em hyperbolic}, if
it has at least one eigenvalue $\la$ with $\abs{\la}>1$ and at least one eigenvalues $\mu$ with $\abs{\mu}<1$.
Over the complex numbers the conditions are equivalent to the fact that $\Lg$ does not admit a {\em nonsingular derivation}.
Each condition implies that $\Lg$ is nilpotent. In this context it is interesting to note that there exist
nilpotent Lie algebras, which are neither characteristically nilpotent, nor admit a nonsingular derivation.
\end{rem}

\section{Nilpotent Lie groups admitting square-integrable representations} \label{sec:squareintegrable}

In this section we let $G$ always be a connected, simply connected nilpotent Lie group with Lie algebra $\Lg$.
Denote the dual vector space of $\Lg$ by $\Lg^{\ast}$ and the center of $\Lg$ by $\Lz$.  Let ${\rm Ad}\colon G\ra GL(\Lg)$
denote the the adjoint representation of $G$. Then $G$ acts on $\Lg^*$ by the {\em coadjoint map} 
${\rm Ad}^*\colon G\ra GL(\Lg^*)$ defined by
\[
({\rm Ad}^*(g) \ell )(x)=\ell({\rm Ad}(g)^{-1}x),
\]
for $x\in \Lg$, $g\in G$ and $\ell\in \Lg^*$.

\begin{defi}
The stabilizer of $\ell\in \Lg^*$ for the coadjoint map is the closed connected subgroup
\[
G(\ell)=\{g\in G\mid {\rm Ad}^*(g)\ell=\ell \}
\]
of $G$. Its Lie algebra is given by the Lie subalgebra
\[
\Lg(\ell)=\{y\in \Lg\mid \ell([x,y])=0 \; \forall x\in \Lg\}
\]
of $\Lg$. 
\end{defi}

Each $\ell\in \Lg^*$ determines a skew-symmetric bilinear form $B_{\ell}\colon \Lg\times \Lg\ra \R$ by $B_{\ell}(x,y)=\ell([x,y])$.
The radical of $B_{\ell}$ is given by $\Lg(\ell)$. Therefore $\Lg(\ell)$ is sometimes called the {\em radical of $\ell$}.
Note that $\Lg(\ell)$ contains the center $\Lz$ of $\Lg$. For any $\ell\in \Lg^*$, the subalgebra $\Lg(\ell)$ has even codimension
in $\Lg$, see Lemma $1.3.2$ in \cite{COG}.

\begin{rem}
The radical $\Lg(\ell)$ of $\ell\in \Lg^*$ is discussed in \cite{COG} with several examples. It is also
related to the {\em index} of a Lie algebra $\Lg$, which is defined by
\[
\chi(\Lg)=\inf \{ \dim \ell(\Lg) \mid \ell \in \Lg^*\}.
\]
For a discussion and computation of the index see \cite{AMA} and the references given there.
\end{rem}  

Let $(\pi, \Hpi)$ be an irreducible unitary representation of $G$. Then there exists $\ell \in \mathfrak{g}^*$ and a
polarization $\mathfrak{h}$ of $\ell$ such that $\pi$ is unitarily equivalent to an induced representation
$\pi_{\ell} := \ind_H^G \chi_{\ell}$, where $H = \exp(\mathfrak{h})$ and
\[
 \chi_{\ell} (\exp X) = e^{i \ell (X)}, \quad X \in \mathfrak{h}.
\]
Two irreducible induced representations $\ind_H^G (\chi_{\ell})$ and $\ind_H^G (\chi_{\ell'})$ are
unitarily equivalent if and only if $\ell, \ell' \in \mathfrak{g}^*$ belong to the same coadjoint orbit.
The coadjoint orbit $\mathcal{O}_{\ell} := \Ad^*(G) \ell$ associated with the equivalence class $\pi \in \widehat{G}$ is
often simply denoted by $\mathcal{O}_{\pi}$. For more details see Section $2.2$ in \cite{COG}.

\begin{defi}
An irreducible unitary representation $(\pi, \Hpi)$ of $G$ is said to be \emph{square-integrable modulo $Z$} if it
admits a vector $\eta \in \Hpi \setminus \{0\}$ such that
\begin{align*} 
 \int_{G/Z} |\langle \eta, \pi(\ov{x}) \eta \rangle |^2 \; d\mu_{G/Z} (\ov{x}) < \infty,  
\end{align*}
where $\ov{x} = xZ$ and $\mu_{G/Z}$ denotes the Haar measure on $G/Z$.
\end{defi}

If $(\pi, \Hpi)$ is square-integrable modulo $Z$,
then it  can be treated as projective unitary representation $\overline{\pi}$ of $G/Z$. Indeed, if $s : G/Z \to G$
is a continuous section for the canonical projection $q : G \to G/Z$, meaning that
$q \circ s = \mathrm{id}_{G/Z}$, then the map $\overline{\pi} : G/Z \to \mathcal{U} (\Hpi)$ defined by
\begin{align*}  
 \overline{\pi} (\ov{x}) = \pi(s(\ov{x})) \quad \text{for} \quad \ov{x} = xZ \in G/Z, 
\end{align*}
is a projective unitary representation of $G/Z$ that is irreducible and square-integrable. 

The following characterization is a combination of Theorem $3.2.3$ and Corollary $4.5.4$ in \cite{COG}.

\begin{lem} \label{lem:flat_orbit}
Let $(\pi, \Hpi)$ be an irreducible unitary representation of $G$.
The following assertions are equivalent:
\begin{enumerate}
 \item[(1)] $\pi$ is square-integrable modulo $Z$;
 \item[(2)] $\mathfrak{g} (\ell) = \mathfrak{z}$ for some (all) $\ell \in \mathcal{O}_{\pi}$;
 \item[(3)] $\mathcal{O}_{\ell} = \ell + \mathfrak{z}^{\perp}$ for some (all) $\ell \in \mathcal{O}_{\pi}$.
\end{enumerate}
\end{lem}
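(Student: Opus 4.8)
The statement assembles standard facts from Kirillov theory, and the plan is to separate the elementary linear-algebraic core from the analytically substantial implication.

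First I would dispose of the ``some (all)'' clauses. If $\ell' = \Ad^{\ast}(g)\ell$ lies in $\mathcal{O}_{\pi}$, then unwinding the definition of $\Ad^{\ast}$ (replacing $x$ by $\Ad(g)x$ in the defining equations of the radical) gives $\mathfrak{g}(\ell') = \Ad(g)\,\mathfrak{g}(\ell)$. Since $z \in \mathfrak{z}$ satisfies $\ad(x)z = 0$ for all $x$, we have $\Ad(g)z = z$, so $\Ad(g)$ restricts to the identity on $\mathfrak{z}$; hence $\mathfrak{g}(\ell) = \mathfrak{z}$ holds for one point of the orbit iff it holds for every point. Likewise $\mathcal{O}_{\ell'} = \mathcal{O}_{\ell}$, and $\ell' + \mathfrak{z}^{\perp} = \ell + \mathfrak{z}^{\perp}$ because $(\Ad^{\ast}(g)\ell - \ell)(z) = \ell(\Ad(g)^{-1}z - z) = 0$ for $z \in \mathfrak{z}$. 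So in both (2) and (3) the quantifiers ``some'' and ``all'' coincide, and we fix an arbitrary $\ell \in \mathcal{O}_{\pi}$.

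Next I would prove (2) $\Leftrightarrow$ (3) by comparing dimensions. The last computation shows $\mathcal{O}_{\ell} \subseteq \ell + \mathfrak{z}^{\perp}$ with no hypothesis on $\ell$. The orbit $\mathcal{O}_{\ell}$ is a connected (as $G$ is connected) closed submanifold of $\mathfrak{g}^{\ast}$ with $\dim \mathcal{O}_{\ell} = \dim \mathfrak{g} - \dim \mathfrak{g}(\ell)$, while $\ell + \mathfrak{z}^{\perp}$ is an affine subspace of dimension $\dim \mathfrak{g} - \dim \mathfrak{z}$. Because $\mathfrak{z} \subseteq \mathfrak{g}(\ell)$ always holds, these dimensions agree if and only if $\mathfrak{g}(\ell) = \mathfrak{z}$; and a connected closed submanifold of an affine space having the same dimension as that affine space is both open and closed in it, hence equal to it. Therefore (2) $\Leftrightarrow$ $\dim \mathcal{O}_{\ell} = \dim(\ell + \mathfrak{z}^{\perp})$ $\Leftrightarrow$ $\mathcal{O}_{\ell} = \ell + \mathfrak{z}^{\perp}$, which is (3).

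Finally, the equivalence (1) $\Leftrightarrow$ (2) is the real content, and I would import it from \cite{COG} rather than reprove it. Under the Kirillov correspondence $\pi$ is unitarily equivalent to the induced representation $\pi_{\ell} = \ind_H^G \chi_{\ell}$ attached to $\ell \in \mathcal{O}_{\pi}$, and Theorem $3.2.3$ together with Corollary $4.5.4$ of \cite{COG} state that $\pi_{\ell}$ is square-integrable modulo $Z$ exactly when the radical $\mathfrak{g}(\ell)$ coincides with the center $\mathfrak{z}$. This is the one step that is not formal: its proof realizes $\pi_{\ell}$ in an explicit monomial model, computes the matrix coefficients $\ov{x} \mapsto \langle \eta, \pi(\ov{x})\eta\rangle$ along a coexponential section for $G/Z$, and checks their square-integrability together with the associated orthogonality (formal degree) relations, which requires a careful choice of polarization and the Pfaffian normalization of Haar measure on $G/Z$. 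That is the step I expect to be the main obstacle for a self-contained treatment; here it is legitimately delegated to \cite{COG}. Combining the three parts gives (1) $\Leftrightarrow$ (2) $\Leftrightarrow$ (3).
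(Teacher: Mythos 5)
Your proposal is correct and follows essentially the same route as the paper, which simply records this lemma as a combination of Theorem 3.2.3 and Corollary 4.5.4 of \cite{COG}; you delegate the substantive implication (1) $\Leftrightarrow$ (2) to the same source and additionally write out the elementary parts (orbit-independence of the conditions and the dimension count for (2) $\Leftrightarrow$ (3)) that the paper leaves implicit. The only point worth making explicit in your argument for (2) $\Rightarrow$ (3) is that coadjoint orbits of connected, simply connected nilpotent Lie groups are closed in $\mathfrak{g}^*$ (Theorem 3.1.4 of \cite{COG}), which you use when concluding that the orbit fills out the affine subspace $\ell + \mathfrak{z}^{\perp}$.
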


The condition $(3)$ is often referred to as the \emph{flat orbit} condition.

\section{A two-parameter family} \label{sec:example}

In this section we will construct a two-parameter family of nilpotent Lie groups yielding Theorem \ref{thm:intro}. In addition, this yields nilpotent Lie groups admitting
square-integrable projective unitary representations, but does not admit a family of dilations. This follows from the
following lemma, which transfers the question to the level of Lie algebras.

\begin{lem} \label{4.1}
Let $\mathfrak{g}$ be  a nilpotent Lie algebra admitting a linear form $\ell \in \mathfrak{g}^*$ such that
$\mathfrak{g}(\ell) = \mathfrak{z}$ and such that $\mathfrak{g} / \mathfrak{z}$ is characteristically nilpotent.
Then the associated connected, simply connected nilpotent Lie group with Lie algebra $\mathfrak{g}/\mathfrak{z}$ admits
irreducible, square-integrable projective unitary representations but does not admit a family of dilations.
\end{lem}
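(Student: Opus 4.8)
The plan is to deduce the statement directly from Lemma~\ref{lem:flat_orbit}, from the discussion of projective representations in Section~\ref{sec:squareintegrable}, and from Corollary~\ref{2.7}; no new ideas are needed, only a careful identification of the groups involved. First I would set up notation: let $G$ be the connected, simply connected nilpotent Lie group with Lie algebra $\mathfrak{g}$, so that $\exp\colon\mathfrak{g}\to G$ is a diffeomorphism. By the structure theory of simply connected nilpotent Lie groups, the center of $G$ is the closed connected subgroup $Z=\exp(\mathfrak{z})$, and $G/Z$ is a connected, simply connected nilpotent Lie group with Lie algebra $\mathfrak{g}/\mathfrak{z}$. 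Hence the group occurring in the statement may be taken to be $G/Z$.

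For the square-integrability assertion I would argue as follows. Choosing a polarization $\mathfrak{h}$ of the given $\ell\in\mathfrak{g}^*$ and setting $H=\exp(\mathfrak{h})$, Kirillov theory yields the irreducible unitary representation $\pi_\ell=\ind_H^G\chi_\ell$ of $G$, whose associated coadjoint orbit is $\mathcal{O}_{\pi_\ell}=\mathcal{O}_\ell=\Ad^*(G)\ell$. The hypothesis $\mathfrak{g}(\ell)=\mathfrak{z}$ is precisely condition $(2)$ of Lemma~\ref{lem:flat_orbit} for this orbit, so $\pi_\ell$ is square-integrable modulo $Z$. Then, as recalled in Section~\ref{sec:squareintegrable}, fixing a continuous section $s\colon G/Z\to G$ of the canonical projection $q\colon G\to G/Z$ produces the projective unitary representation $\overline{\pi}(\overline{x})=\pi_\ell(s(\overline{x}))$ of $G/Z$, which is irreducible and square-integrable. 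Thus $G/Z$ admits an irreducible, square-integrable projective unitary representation.

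For the remaining assertion I would simply invoke that $\mathfrak{g}/\mathfrak{z}$ is characteristically nilpotent by hypothesis, so by Corollary~\ref{2.7} it admits no family of dilations; since a family of dilations on a group is defined via its Lie algebra, $G/Z$ admits no family of dilations either, and the proof is complete. The only point that requires a moment of care — and the closest thing to an obstacle here — is the identification made in the first paragraph: one must be sure that the subgroup $Z=\exp(\mathfrak{z})$ really is the center of $G$ (so that ``square-integrable modulo $Z$'' in Lemma~\ref{lem:flat_orbit} coincides with ``square-integrable modulo the center'') and that $G/Z$ is exactly the simply connected group attached to $\mathfrak{g}/\mathfrak{z}$. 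Both are standard facts about simply connected nilpotent Lie groups, so once they are recorded the lemma follows immediately.
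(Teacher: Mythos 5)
Your proposal is correct and follows essentially the same route as the paper's own proof: invoke Lemma \ref{lem:flat_orbit} via the hypothesis $\mathfrak{g}(\ell)=\mathfrak{z}$ to get square-integrability of $\pi_\ell$ modulo $Z$, pass to the associated projective representation of $G/Z$, and apply Corollary \ref{2.7} to the characteristically nilpotent quotient $\mathfrak{g}/\mathfrak{z}$. The extra care you take in identifying $Z=\exp(\mathfrak{z})$ as the center and $G/Z$ as the simply connected group attached to $\mathfrak{g}/\mathfrak{z}$ is a welcome addition, but the argument is the same.
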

\begin{proof}
Let $G$ be the connected, simply connected Lie group with Lie algebra $\mathfrak{g}$. If there exists $\ell \in \mathfrak{g}^*$ such
that $\mathfrak{g}(\ell) = \mathfrak{z}$, then the irreducible representation $\pi = \pi_{\ell}$ of $G$ associated to $\ell$ is
square-integrable modulo the center $Z$ of $G$ by Lemma \ref{lem:flat_orbit}. As such, it gives rise to an irreducible,
square-integrable projective representation $\overline{\pi}$ of $G/Z$ as indicated above.
However, since the Lie algebra $\mathfrak{g}/\mathfrak{z}$ of $G/Z$ is characteristically nilpotent, it does not admit a
family of dilations by Corollary $\ref{2.7}$. Thus, $G/Z$ does not admit a family of dilations.
\end{proof}

In order to find examples yielding Theorem \ref{thm:intro}, we need to construct nilpotent Lie algebras $\Lg$ together with
 $\ell\in \Lg^*$ such that $\Lg(\ell)=\Lz$, so that the quotient algebra $\Lg/\Lg(\ell)$ is characteristically nilpotent.
Note that $\Lg/\Lg(\ell)$ has to be even-dimensional. A natural choice here for $\Lg$ are
so called {\em filiform nilpotent} Lie algebras, i.e., nilpotent Lie algebras of dimension $n$ with nilpotency index $n-1$.
For filiform nilpotent Lie algebras the center is one-dimensional, so that $\dim (\Lg)$ needs to be odd.
Note that we need $\dim \Lg\ge 9$, because for $\dim \Lg\le 7$ the quotient Lie algebra $\Lg/\Lz$ has dimension less than $7$, and hence
admits a nonsingular derivation and cannot be characteristically nilpotent. In order to find multi-parameter families
we even would like to assume that $\dim \Lg\ge 11$. We have already studied filiform nilpotent Lie algebras of dimension $10$ and $11$
in \cite{BU5} in the context of affinely-flat structures. This was helpful for the  construction of the following
two-parameter family $\Lg=\Lg(\al,\be)$ of filiform nilpotent Lie algebras of dimension $11$, with an adapted basis
$\{e_1,\ldots ,e_{11}\}$. It is defined by the following Lie brackets:

\begin{align*}
[e_1,e_i]   & = e_{i+1} \text{ for all } 1\le i\le 10,\\[0.1cm]
[e_2,e_3]   & = e_5+\al e_6, \\ 
[e_2,e_4]   & = e_6+\al e_7, \\  
[e_2,e_5]   & = -e_7+(\al-\be)e_8,\\  
[e_2,e_6]   & = -3e_8+(\al-2\be)e_9,\\  
[e_2,e_7]   & = -2e_9-\frac{1}{4}(5\al+7\be)e_{10}+\frac{1}{16}(27\al^2+12\al\be+\be^2)e_{11}, \\ 
[e_2,e_8]   & = 2e_{10}-\frac{1}{4}(23\al+\be)e_{11},\\
[e_2,e_9]   & = -e_{11},\\[0.1cm]
[e_3,e_4]   & = 2e_7+\be e_8, \\  
[e_3,e_5]   & = 2e_8+\be e_9,\\  
[e_3,e_6]   & = -e_9+\frac{1}{4}(9\al-\be)e_{10}-\frac{1}{16}(27\al^2+12\al\be+\be^2)e_{11},\\  
[e_3,e_7]   & = -4e_{10}+\frac{3}{2}(3\al-\be)e_{11}, \\ 
[e_3,e_8]   & = 3e_{11},\\[0.1cm]
[e_4,e_5]   & = 3e_9-\frac{1}{4}(9\al-5\be)e_{10}+\frac{1}{16}(27\al^2+12\al\be+\be^2)e_{11}\\  
[e_4,e_6]   & = 3e_{10}-\frac{1}{4}(9\al-5\be)e_{11},\\  
[e_4,e_7]   & = -7e_{11}, \\
[e_5,e_6]   &= 10e_{11}.  
\end{align*}

The Jacobi identity holds for all parameters $\al,\be$. The Lie brackets, which are not 
implied by skew-symmetry, or bilinearity, are zero.
Denote by $\Lz$ the center of these Lie algebras. We have $\Lz={\rm span} \{e_{11}\}$ for 
all pairs $(\al,\be)$.

\begin{prop} \label{4.2}
The Lie algebras $\Lg(\al,\be)$ are pairwise nonisomorphic, i.e., we have  
$\Lg(\al,\be)\cong \Lg(\al',\be')$ if and only if $(\al,\be)=(\al',\be')$. 
They are characteristically nilpotent if and only if $(\al,\be)\neq (0,0)$.
\end{prop}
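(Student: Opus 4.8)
The plan is to prove the two statements separately, in each case reducing to a finite computation with the structure constants displayed above. For the \emph{pairwise non-isomorphism}: each $\Lg(\al,\be)$ is filiform of dimension $11$, so its lower central series $\Lg=\Lg^1\supset\Lg^2\supset\cdots\supset\Lg^{10}=\Lz\supset 0$ has one-dimensional quotients $\Lg^k/\Lg^{k+1}$ for $k\ge 2$ and $\dim\Lg/\Lg^2=2$, with every $\Lg^k$ characteristic; moreover a filiform Lie algebra carries a canonical characteristic subspace of codimension one, which here is $\langle e_2,\dots,e_{11}\rangle$. Consequently any isomorphism $\phi\colon\Lg(\al,\be)\to\Lg(\al',\be')$ maps $\Lg^k$ onto $(\Lg')^k$ and $\langle e_2,\dots,e_{11}\rangle$ onto $\langle e_2',\dots,e_{11}'\rangle$, so that $\phi(e_1)\equiv\la e_1'$ modulo $\langle e_2',\dots\rangle$ and $\phi(e_2)\equiv\mu e_2'$ modulo $(\Lg')^2$ for some $\la,\mu\neq 0$, and then $\phi(e_k)\equiv\la^{k-2}\mu\, e_k'$ modulo $(\Lg')^{k-1}$ for $k\ge 2$ follows by applying $\phi$ to $e_{k+1}=[e_1,e_k]$. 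Substituting this normal form into the defining brackets — first $[e_1,e_i]=e_{i+1}$, then $[e_2,e_3]=e_5+\al e_6$ (which links $\mu$ to $\la$ and produces the first constraint relating $\al$ and $\al'$), then the remaining relations — yields a finite polynomial system in $\al,\be,\al',\be',\la,\mu$ whose solution determines the isomorphism class of $\Lg(\al,\be)$ in terms of its parameters. The substantive part here is the bookkeeping needed to rule out further identifications produced by the lower-order corrections to $\phi$; the precise values of the structure constants, in particular the quadratic $e_{11}$-terms, are what make the argument close.

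For the \emph{characteristic nilpotency}: if $(\al,\be)=(0,0)$, every structure constant is weight-homogeneous — the coefficient of $e_m$ in $[e_i,e_j]$ is nonzero only when $m=i+j$ — so $\Lg(0,0)=\bigoplus_{i\ge 1}\langle e_i\rangle$ is a nontrivial positive $\Z$-grading; by Proposition~\ref{2.6} it admits a family of dilations, hence by Corollary~\ref{2.7} (read contrapositively) it is not characteristically nilpotent (equivalently, $De_i=i\,e_i$ is a non-nilpotent derivation). Now fix $(\al,\be)\neq(0,0)$. Since $\Der(\Lg)$ is the Lie algebra of the linear algebraic group $\Aut(\Lg)$, it is stable under Jordan decomposition, so it suffices to show that $\Lg(\al,\be)$ admits no nonzero semisimple derivation. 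A semisimple $D$ preserves the characteristic flag, and — working over $\C$ and using that $\Lg$ is two-generated — one shows $D$ is diagonal in a suitable adapted basis, $De_i=\la_i e_i$ with $\la_{i+1}=\la_1+\la_i$; in that basis the relations again take the displayed form, now with parameters $(\al'',\be'')$ which, by the first part, vanish simultaneously with $(\al,\be)$, hence are not both zero. Applying $D$ to $[e_2,e_3]=e_5+\al'' e_6$ and comparing coefficients of $e_5$ forces $\la_2=2\la_1$, hence $\la_i=i\la_1$; comparing coefficients of $e_6$ then gives $\la_1\al''=0$, and applying $D$ to $[e_3,e_4]=2e_7+\be'' e_8$ and comparing coefficients of $e_8$ gives $\la_1\be''=0$. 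Since $(\al'',\be'')\neq(0,0)$ this forces $\la_1=0$, so $\la_i=0$ for all $i$ and $D=0$; therefore $\Lg(\al,\be)$ is characteristically nilpotent.

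The step I expect to be the main obstacle is the passage from "semisimple" to "diagonal in an adapted basis". One has to rule out the case in which $D$ acts as a scalar on $\Lg/\Lg^2$ (then $D$ acts deeper on each step of the lower central series, so $D$ is simultaneously semisimple and nilpotent, hence $0$), observe that otherwise $D$ has distinct eigenvalues on $\Lg/\Lg^2$, pick homogeneous generators, rebuild an adapted basis from $D$-eigenvectors, and check that the rebuilt algebra again belongs to the given family — the last point being where the scaling analysis from the first part enters. If one prefers to avoid this reduction, the alternative is to compute $\Der(\Lg(\al,\be))$ directly: a derivation is determined by $De_1=\sum_i a_ie_i$ and $De_2=\sum_i b_ie_i$, one propagates it via $De_{k+1}=[De_1,e_k]+[e_1,De_k]$ and imposes the Leibniz rule on the brackets $[e_2,e_3],\dots,[e_5,e_6]$, and the resulting linear system, when $(\al,\be)\neq(0,0)$, forces the diagonal entries $a_1$ and $b_2$ to vanish — so that every derivation is strictly upper triangular along the flag $\langle e_1,\dots,e_{11}\rangle\supset\langle e_2,\dots,e_{11}\rangle\supset\cdots\supset\langle e_{11}\rangle$, hence nilpotent. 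This route is longer but entirely mechanical.
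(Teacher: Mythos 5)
Your first part (pairwise non-isomorphism) follows essentially the same route as the paper: both arguments show that any isomorphism $\Lg(\al,\be)\to\Lg(\al',\be')$ is triangular with respect to the adapted basis with diagonal $(\xi,\xi^2,\dots,\xi^{11})$ (your $\mu=\la^2$ from $[e_2,e_3]$) and then read off $\al=\al'$, $\be=\be'$ from the off-diagonal equations; both leave the final bookkeeping implicit, so you are at the same level of detail as the paper here. Your treatment of $(\al,\be)=(0,0)$ via the weight-homogeneity of the structure constants (so that $De_i=ie_i$ is a non-nilpotent derivation) is correct and in fact more explicit than the paper, which leaves this direction implicit.

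For characteristic nilpotency when $(\al,\be)\neq(0,0)$, your \emph{secondary} route --- propagating a derivation from $De_1$, $De_2$ via $De_{k+1}=[De_1,e_k]+[e_1,De_k]$, imposing Leibniz on the remaining brackets, and checking that the diagonal entries $a_1$, $b_2$ are forced to vanish so that $D$ is strictly triangular along the characteristic flag --- is exactly the paper's proof (the paper computes a $13$-dimensional derivation algebra of strictly lower-triangular matrices). Your \emph{primary} route via Jordan decomposition in the algebraic Lie algebra $\Der(\Lg)$ is a genuinely more conceptual alternative, and the eigenvalue bookkeeping in it ($\la_{i+1}=\la_1+\la_i$, $\la_2=2\la_1$ from the $e_5$-coefficient, then $\la_1\al''=\la_1\be''=0$) is fine, but it contains a real gap at the step you yourself flag: you assert that in the rebuilt adapted basis of $D$-eigenvectors ``the relations again take the displayed form, now with parameters $(\al'',\be'')$.'' The displayed brackets are a normal form, not the generic shape of an $11$-dimensional filiform algebra in an arbitrary adapted basis; a basis of eigenvectors will in general produce structure constants outside the two-parameter slice (e.g.\ the coefficient of $f_6$ in $[f_2,f_4]$ need not equal $1$), and Proposition~\ref{4.2}(i) does not repair this, since it only compares two algebras \emph{both already presented} in the normal form. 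To close this route you would have to show that the normal-form reduction can be performed without leaving the eigenbasis, or directly rule out a positive $\Z$-grading with $\deg f_i=i$ on $\Lg(\al,\be)_{\C}$ for $(\al,\be)\neq(0,0)$ --- either of which is essentially the computation you were trying to avoid. So the proposal stands or falls on its mechanical fallback, which does coincide with the paper's argument.
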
  

\begin{proof}
Let $\phi\colon \Lg(\al,\be)\ra  \Lg(\al',\be')$ be a Lie algebra homomorphism. Then the conditions easily imply
that the matrix of $\phi$ is lower-triangular with diagonal $(\xi,\xi^2,\ldots ,\xi^{11})$ for some $\xi$, with respect to the
adapted basis $\{e_1,\ldots ,e_{11}\}$. The equations in the entries of the matrix for $\phi$ 
immediately imply that $\al'=\al$ and $\be'=\be$. A computation shows that the derivation algebra
of $\Lg(\al,\be)$ is $13$-dimensional for all $(\al,\be)$ with $3\al+\be\neq 0$, or with $3\al+\be=0$, $\al\neq 0$.
All derivations are represented by a strictly lower-triangular matrix in this case, and hence are all nilpotent.
A basis of derivations can be written down as in the proof of Proposition $\ref{4.4}$.
\end{proof}  

\begin{prop} \label{4.3}
Let $\Lg=\Lg(\al,\be)$ and $\ell=e_{11}^* \in \mathfrak{g}^*$. Then we have $\Lg(\ell)=\Lz$ for all
pairs $(\al,\be)$.
\end{prop}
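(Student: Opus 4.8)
The plan is to identify $\Lg(\ell)$ with the radical of the skew-symmetric form $B_{\ell}(x,y)=\ell([x,y])$, which for $\ell=e_{11}^{*}$ is just the coefficient of $e_{11}$ in $[x,y]$, and then to show that the $10\times 10$ Gram matrix of $B_{\ell}$ on ${\rm span}\{e_1,\dots,e_{10}\}$ is nonsingular. Since $\Lz={\rm span}\{e_{11}\}$ satisfies $[e_{11},\cdot]=0$, we always have $\Lz\subseteq\Lg(\ell)$, so it will be enough to prove the reverse inclusion, equivalently $\dim\Lg(\ell)\le 1$.

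First I would record the shape of $B_{\ell}$. Put $M_{ij}=B_{\ell}(e_i,e_j)$ for $1\le i,j\le 10$. Inspecting the bracket relations term by term shows that $[e_i,e_j]\in{\rm span}\{e_{i+j},e_{i+j+1},\dots,e_{11}\}$ for all $i,j$ --- the parameters $\al,\be$ only ever contribute to basis vectors of strictly larger index --- so $M_{ij}=0$ whenever $i+j>11$. On the anti-diagonal $i+j=11$ the table gives $[e_1,e_{10}]=e_{11}$, $[e_2,e_9]=-e_{11}$, $[e_3,e_8]=3e_{11}$, $[e_4,e_7]=-7e_{11}$, $[e_5,e_6]=10e_{11}$, and hence, using skew-symmetry for the remaining entries, the anti-diagonal of $M$ is $(1,-1,3,-7,10,-10,7,-3,1,-1)$. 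In particular every anti-diagonal entry of $M$ is nonzero and independent of $(\al,\be)$. Reversing the order of the columns of $M$ therefore produces an upper-triangular matrix with nonzero diagonal, so $\det M\ne 0$ for every pair $(\al,\be)$.

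Then I would conclude as follows. Let $y=\sum_{i=1}^{11}y_ie_i\in\Lg(\ell)$. For each $k\in\{1,\dots,10\}$ one has $B_{\ell}(e_k,e_{11})=\ell([e_k,e_{11}])=0$, so the defining condition $0=B_{\ell}(e_k,y)=\sum_{j=1}^{10}M_{kj}\,y_j$ says that the vector $(y_1,\dots,y_{10})$ lies in $\ker M$. Since $M$ is invertible this forces $y_1=\dots=y_{10}=0$, i.e.\ $y\in{\rm span}\{e_{11}\}=\Lz$. Combined with $\Lz\subseteq\Lg(\ell)$ we obtain $\Lg(\ell)=\Lz$.

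There is no real difficulty here beyond the bookkeeping in the second paragraph: the one thing to check carefully is the anti-triangular shape of the Gram matrix --- namely that the $e_{11}$-component of $[e_i,e_j]$ vanishes when $i+j>11$ and is a nonzero constant, independent of $\al,\be$, when $i+j=11$. That observation is exactly what makes $M$ nonsingular uniformly in the parameters, and hence why $\Lg(\ell)=\Lz$ holds for all $(\al,\be)$ at once.
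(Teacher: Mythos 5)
Your proof is correct and follows essentially the same route as the paper: both reduce the condition $x\in\Lg(\ell)$ to the linear system $\ell([x,e_i])=0$, $1\le i\le 10$, and observe that it forces $x_1=\dots=x_{10}=0$ uniformly in $(\al,\be)$. The paper simply writes out the ten equations explicitly, while you package the same computation as the nonsingularity of the Gram matrix of $B_\ell$ via its anti-triangular shape with constant nonzero anti-diagonal $(1,-1,3,-7,10,-10,7,-3,1,-1)$ --- a tidy way to make the parameter-independence transparent, but not a genuinely different argument.
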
  

\begin{proof}
Let $x=(x_1,\ldots ,x_{11})\in \Lg(\ell)$, which means $\ell ([x,y])=0$ for all $y\in \Lg$. This is equivalent to
the conditions $\ell ([x,e_i])=0$ for $i=1,\ldots 11$. For each $1\le i\le 10$ we obtain a linear equation in the
variables $x_1,\ldots ,x_{11}$. For $i=11$ we only have $0=0$. Thus we obtain the following system of linear equations.
\begin{align*}
0 & = x_{10},\\
0 & = \ga x_7 - 4(23\al+\be)x_8 - 16x_9, \\
0 & = \ga x_6 - 24(3\al-\be)x_7 - 48x_8,\\
0 & = \ga x_5 - 4(9\al - 5\be)x_6 - 112x_7,\\
0 & = \ga x_4 - 160x_6,\\
0 & = \ga x_3 + 4(9\al - 5\be)x_4 - 160x_5,\\
0 & = \ga x_2 + 24(3\al-\be)x_3 - 112x_4,\\
0 & = (23\al + \be)x_2 - 12x_3,\\
0 & = x_2,\\
0 & = x_1,  
\end{align*}
where $\ga=27\al^2 + 12\al\be + \be^2$. It follows that $x_1=\cdots =x_{10}=0$ and $x\in \Lz$.
So we have $\Lg(\ell)\subseteq \Lz$. The converse inclusion is clear. 
\end{proof}

\begin{prop}\label{4.4}
The quotient Lie algebras $\Lg(\al,\be)/\Lz$ of dimension $10$ are characteristically nilpotent if and only if
$(\al,\be)\neq (0,0)$.
\end{prop}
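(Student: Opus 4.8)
The plan is to proceed exactly as in the proof of Proposition \ref{4.2}, but now for the $10$-dimensional quotient $\Lh:=\Lg(\al,\be)/\Lz$, which has an adapted basis given by the images $\bar e_1,\dots,\bar e_{10}$ of $e_1,\dots,e_{10}$, with bracket relations obtained from those of $\Lg(\al,\be)$ by discarding every term involving $e_{11}$. First I would treat the easy direction: if $(\al,\be)=(0,0)$, then $\Lh=\Lg(0,0)/\Lz$ carries the nontrivial $\Z$-grading $\deg(\bar e_i)=i$ (the bracket relations for $\al=\be=0$ are manifestly homogeneous of the correct weight once the $e_{11}$-terms are dropped), so the diagonal map $D(\bar e_i)=i\,\bar e_i$ is a nonnilpotent derivation and $\Lh$ is not characteristically nilpotent. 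By Corollary \ref{2.7} (or rather its converse observation after Lemma \ref{2.4}), this settles the ``only if'' part.

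For the ``if'' direction, assume $(\al,\be)\neq(0,0)$ and show every $D\in\Der(\Lh)$ is nilpotent. The standard approach for filiform Lie algebras is to exploit the descending central series, which for a filiform algebra is a full flag $\Lh=C^1\supsetneq C^2\supsetneq\cdots$, and here $C^1=\Lh$, $C^2=\s\{\bar e_3,\dots,\bar e_{10}\}$, $C^k=\s\{\bar e_{k+1},\dots,\bar e_{10}\}$ for $k\ge 2$. Any derivation preserves each $C^k$, hence is represented by a lower-triangular matrix with respect to $\{\bar e_1,\dots,\bar e_{10}\}$; write $D\bar e_i=\sum_{j\ge i}d_{ji}\bar e_j$ and let $\la_i=d_{ii}$ be the diagonal entries. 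From the relations $[\bar e_1,\bar e_i]=\bar e_{i+1}$ one gets $\la_{i+1}=\la_1+\la_i$, so $\la_i=\la_1+(i-1)\la_2$ is determined by $\la_1,\la_2$ (using $\la_2$ in place of $\la_1$ in the usual normalization, i.e.\ $\la_i = (i-1)\mu + \la_1$ with $\mu:=\la_2-\la_1$). The task is then to show that the remaining structure equations force $\la_1=\mu=0$, i.e.\ $D$ is strictly lower-triangular and hence nilpotent. Concretely I would apply $D$ to the defining brackets $[\bar e_2,\bar e_3]=\bar e_5+\al\bar e_6$, $[\bar e_3,\bar e_4]=2\bar e_7+\be\bar e_8$, $[\bar e_2,\bar e_5]=-\bar e_7+(\al-\be)\bar e_8$, etc., and read off the diagonal-coefficient conditions. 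The homogeneity of the ``leading'' terms gives equations like $\la_2+\la_3=\la_5$, $\la_3+\la_4=\la_7$, which are automatically satisfied; the crucial constraints come from the \emph{inhomogeneous} (i.e.\ $\al$- or $\be$-) terms, which are precisely what break the grading: e.g.\ comparing the $\bar e_6$-coefficient in $D[\bar e_2,\bar e_3]$ forces $\al(\la_2+\la_3-\la_6)=0$, i.e.\ $\al\mu=0$ (since $\la_2+\la_3-\la_6 = -\mu$), and similarly the $\bar e_8$-coefficient in $D[\bar e_3,\bar e_4]$ forces $\be\mu=0$. Since $(\al,\be)\neq(0,0)$ this yields $\mu=0$. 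A further comparison — for instance using $[\bar e_2,\bar e_6]=-3\bar e_8+(\al-2\be)\bar e_9$ together with $[\bar e_2,\bar e_5]$ — should then pin down $\la_1=0$ as well (with $\mu=0$ the diagonal is constant $\la_1$, and one off-diagonal structure equation, such as the one coming from $D$ applied to $[\bar e_1,\bar e_2]=\bar e_3$ combined with a second bracket, or the requirement that $D$ map $\Lh$ into itself compatibly with the filiform ``second generator'' $\bar e_2$, forces $\la_1=0$). Once $\la_1=\mu=0$, all $\la_i=0$, so $D$ is strictly lower-triangular and therefore nilpotent, proving $\Lh$ is characteristically nilpotent.

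I would organize the write-up in two steps: (i) the grading argument for $(\al,\be)=(0,0)$, stated briefly; (ii) the derivation computation for $(\al,\be)\neq(0,0)$, where I would exhibit explicitly a $13$-dimensional basis of $\Der(\Lh)$ by strictly lower-triangular matrices (mirroring the remark in Proposition \ref{4.2} that ``a basis of derivations can be written down as in the proof of Proposition \ref{4.4}''), or, more efficiently, just argue via the diagonal-entry analysis above that no derivation has a nonzero diagonal. The latter is cleaner: it reduces the whole claim to the two scalar equations $\al\mu=\be\mu=0$ and one equation forcing $\la_1=0$, all obtained by applying $D$ to at most three or four of the structure relations.

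The main obstacle I anticipate is purely bookkeeping: the structure constants of $\Lg(\al,\be)$ are messy (the $e_{10}$- and especially $e_{11}$-coefficients involve $\ga=27\al^2+12\al\be+\be^2$ and linear combinations like $\frac14(5\al+7\be)$), but passing to the quotient $\Lh$ kills all the $e_{11}$-terms and truncates at $\bar e_{10}$, which drastically simplifies matters — the surviving ``complicated'' coefficients are only the $e_9$- and $e_{10}$-terms. One must be careful that a derivation of the \emph{quotient} need not lift to a derivation of $\Lg(\al,\be)$, so the constraints come only from the quotient relations; this is actually helpful, as there are fewer equations. The one subtlety to verify cleanly is that the $\la_1=0$ conclusion really does follow — if the inhomogeneous terms only constrain $\mu$ and not $\la_1$, one needs the additional input that in a filiform Lie algebra a derivation with $\la_1\ne 0$ but $\mu=0$ (so $D$ acts as $\la_1\cdot\id$ plus nilpotent on the abelianization, hence as $\la_1\cdot\id$ on $\bar e_1$ and, after normalization, on $\bar e_2$) is incompatible with $D[\bar e_1,\bar e_2]=[\,D\bar e_1,\bar e_2\,]+[\,\bar e_1,D\bar e_2\,]$ unless $\la_1=0$ — but in fact with $\mu=0$ we would have all $\la_i$ equal, yet $[\,\bar e_1,\bar e_2\,]=\bar e_3$ forces $\la_3=\la_1+\la_2=2\la_1$, contradicting $\la_3=\la_1$ unless $\la_1=0$. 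So $\la_1=0$ is immediate once $\mu=0$, and the proof closes.
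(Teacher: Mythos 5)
Your overall strategy is the same as the paper's: for $(\al,\be)=(0,0)$ exhibit the non-nilpotent diagonal derivation coming from the grading $\deg \bar e_i=i$ (the paper writes it as $D={\rm diag}(1,2,\ldots,10)$), and for $(\al,\be)\neq(0,0)$ show by direct computation that every derivation is strictly lower-triangular, hence nilpotent. The paper carries out the second step by brute force, exhibiting an explicit basis $D_1,\ldots,D_{12}$ of strictly lower-triangular derivations (with a case distinction $3\al+\be\neq 0$ versus $3\al+\be=0,\ \al\neq 0$); note that $\dim\Der(\Lg(\al,\be)/\Lz)=12$ generically, not $13$ as you state --- $13$ is the dimension at $(0,0)$ and for $\Der(\Lg(\al,\be))$ itself.

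The shortcuts you propose in place of that computation have genuine gaps. First, preservation of the lower central series only yields lower-triangularity below the $2\times 2$ block on ${\rm span}\{\bar e_1,\bar e_2\}$; to see that $D\bar e_2$ has no $\bar e_1$-component you need an additional characteristic subspace (here the centralizer of $C^{8}={\rm span}\{\bar e_9,\bar e_{10}\}$, which equals ${\rm span}\{\bar e_2,\ldots,\bar e_{10}\}$). Second, your diagonal parametrization is inconsistent: the recursion $\la_{i+1}=\la_1+\la_i$ gives $\la_i=\la_2+(i-2)\la_1$, not $\la_i=(i-1)\mu+\la_1$, and the leading-term relation $\la_2+\la_3=\la_5$ from $[\bar e_2,\bar e_3]$ is not ``automatically satisfied'' --- it is a genuine constraint forcing $\la_2=2\la_1$ and hence $\la_i=i\la_1$. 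Third, and most seriously, the comparisons of the $\al$- and $\be$-coefficients do not produce clean scalar equations such as $\al\mu=0$: the $\bar e_6$-coefficient of $D[\bar e_2,\bar e_3]$ also receives the off-diagonal contribution $d_{43}$ from $[\bar e_2,D\bar e_3]\ni d_{43}[\bar e_2,\bar e_4]=d_{43}(\bar e_6+\al\bar e_7)$, while the right-hand side contributes $d_{65}$, so the equation reads $\al(\la_5-\la_6)=d_{65}-d_{43}$ and by itself says nothing about $\la_1$. Consequently your concluding argument for $\la_1=0$ (which in any case rests on two mutually incompatible formulas for the $\la_i$) does not close; one really has to solve the coupled linear system in the diagonal and subdiagonal entries, i.e.\ perform the direct computation the paper records.
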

  
\begin{proof}
A direct computation shows that for $(\al,\be)\neq (0,0)$ all derivations are strictly 
lower-triangular with respect to the basis $\{e_1,\ldots ,e_{10}\}$. Hence all derivations are nilpotent and 
the quotient algebra is characteristically nilpotent. For the explicit computation of all derivations we make
a case distinction. Assume that  $3\al+\be \neq 0$. Then the derivation algebra has dimension $12$, and a basis of derivations
$D_1,\ldots D_{12}$ is given as follows. Here $E_{ij}$ denotes the matrix of size $10$ with entry $1$ at position $(i,j)$
and all other entries equal to zero.
\begin{align*}
D_1    & = E_{3,1}-E_{5,3}-\al E_{6,3}-E_{6,4}-\al E_{7,4}+E_{7,5}+(\be-\al) E_{8,5}+3E_{8,6}+(2\be-\al)E_{9,6} \\
         & \hspace*{0.5cm} + 2E_{9,7}+ \tfrac{5\al+7\be}{4}E_{10,7}-2E_{10,8} \\   
D_2    & = E_{4,1}+E_{5,2}+\al E_{6,2}-2E_{7,4}-\be E_{8,4}-2E_{8,5}-\be E_{9,5}+E_{9,6}+\tfrac{\be-9\al}{4}E_{10,6}+4E_{10,7} \\ 
D_3    & = E_{5,1}+E_{6,2}+\al E_{7,2}+2E_{7,3}+\be E_{8,3}-3E_{9,5}+\tfrac{9\al-5\be}{4}E_{10,5}-3E_{10,6} \\
D_4    & = E_{6,1}-E_{7,2}+2E_{8,3}+(2\be-\al)E_{9,3}+3E_{9,4}+\tfrac{9\be-13\al}{4}E_{10,4} \\ 
D_5    & = E_{7,1}+2E_{9,3}+\tfrac{5\al+7\be}{4}E_{10,3} \\ 
D_6    & = E_{8,1}-2E_{10,3} \\
D_7    & = E_{9,1} \\ 
D_8    & = E_{10,1} \\ 
D_9    & = E_{3,2}+E_{4,3}+E_{5,4}+E_{6,5}+E_{7,6}+E_{8,7}+E_{9,8}+ E_{10,9} \\ 
D_{10} & = E_{8,2}+E_{9,3}+ E_{10,4} \\ 
D_{11} & = E_{9,2}+E_{10,3} \\ 
D_{12} & = E_{10,2}
\end{align*}
For $3\al+\be=0$ with $\al\neq 0$ again all derivations are nilpotent and a basis is given as above.
For $(\al,\be)=(0,0)$ however, the derivation algebra has dimension $13$, and the additional basis vector is given by the
invertible derivation $D={\rm diag}(1,2,3,\ldots,10)$.
Hence the quotient algebra is not characteristically nilpotent in this case.
\end{proof}

Denote by $G(\al,\be)$ the connected, simply connected nilpotent Lie groups of dimension $11$ with Lie algebra
$\Lg(\al,\be)$. We denote by $Z$ the center of $G(\al,\be)$, and by $\Lz$ the center of $\Lg(\al,\be)$. With this notation, a combination of  Lemma \ref{lem:flat_orbit}, Proposition \ref{4.2}, Proposition \ref{4.3} and Proposition \ref{4.4} directly yields Theorem \ref{thm:intro}. In addition, the following is a direct consequence of Lemma \ref{4.1}.

\begin{thm}
The two-parameter family of nilpotent Lie groups $G(\al,\be)/Z$ admits irreducible, square-integrable projective unitary
representations, but does not admit a family of dilations for all pairs $(\al,\be)\neq (0,0)$.
\end{thm}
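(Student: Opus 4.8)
The plan is to deduce this final theorem directly from Lemma \ref{4.1} by verifying its hypotheses for the explicit family $\Lg(\al,\be)$. Lemma \ref{4.1} requires a nilpotent Lie algebra $\Lg$ together with a linear form $\ell\in\Lg^*$ such that $\Lg(\ell)=\Lz$ and such that $\Lg/\Lz$ is characteristically nilpotent; its conclusion is precisely that the connected, simply connected Lie group with Lie algebra $\Lg/\Lz$ admits irreducible, square-integrable projective unitary representations but admits no family of dilations. Since $G(\al,\be)/Z$ is by definition the connected, simply connected Lie group with Lie algebra $\Lg(\al,\be)/\Lz$, it suffices to feed the right $\Lg$ and $\ell$ into Lemma \ref{4.1}.

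First I would fix $\Lg=\Lg(\al,\be)$ and $\ell=e_{11}^*\in\Lg^*$. The Lie algebra $\Lg(\al,\be)$ is filiform nilpotent of dimension $11$ (the Jacobi identity having been checked for all parameters), so in particular it is nilpotent as required. Next I would invoke Proposition \ref{4.3}, which establishes $\Lg(\ell)=\Lz$ for this choice of $\ell$ and all pairs $(\al,\be)$. Then, assuming $(\al,\be)\neq(0,0)$, I would invoke Proposition \ref{4.4}, which states that the $10$-dimensional quotient $\Lg(\al,\be)/\Lz$ is characteristically nilpotent exactly under this hypothesis. With these two inputs, the hypotheses of Lemma \ref{4.1} are met, and applying it gives the theorem.

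There is essentially no obstacle here: the theorem is a formal corollary, and the only substantive work — checking the radical condition and the characteristic nilpotency of the quotient — has already been carried out in Propositions \ref{4.3} and \ref{4.4}. If I wanted to be careful about one point, it would be making sure the excluded locus $(\al,\be)=(0,0)$ is handled consistently: at $(0,0)$ the quotient carries the diagonalizable derivation $\diag(1,2,\dots,10)$, hence admits a $\Z$-grading and a family of dilations, so the theorem genuinely fails there and the restriction to $(\al,\be)\neq(0,0)$ is necessary, not merely convenient. The proof I would write is therefore the two-line argument below.

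\begin{proof}
Fix $(\al,\be)\neq(0,0)$, let $\Lg=\Lg(\al,\be)$ and $\ell=e_{11}^*\in\Lg^*$. Then $\Lg$ is a nilpotent Lie algebra, and by Proposition \ref{4.3} we have $\Lg(\ell)=\Lz$, while by Proposition \ref{4.4} the quotient $\Lg/\Lz$ is characteristically nilpotent. Hence the hypotheses of Lemma \ref{4.1} are satisfied, and the connected, simply connected nilpotent Lie group with Lie algebra $\Lg/\Lz$, which is precisely $G(\al,\be)/Z$, admits irreducible, square-integrable projective unitary representations but does not admit a family of dilations.
\end{proof}
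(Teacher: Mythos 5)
Your proof is correct and follows exactly the route the paper intends: the paper states this theorem as ``a direct consequence of Lemma \ref{4.1}'', and your argument simply makes that explicit by feeding $\Lg(\al,\be)$ and $\ell=e_{11}^*$ into Lemma \ref{4.1} via Propositions \ref{4.3} and \ref{4.4}. Your remark that the exclusion of $(\al,\be)=(0,0)$ is genuinely necessary (because of the diagonalizable derivation $\diag(1,2,\dots,10)$) is a nice additional observation consistent with the paper.
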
  

\begin{rem} \label{rem}
After submitting this paper, we were informed by Ingrid Belti\c{t}\u{a} about
an arXiv preprint \cite{BBD1} in which an example is given of an $8$-dimensional
filiform symplectic Lie algebra that is characteristically nilpotent, cf. \cite[Example 7.5]{BBD1}. This yields a
$9$-dimensional example of a Lie algebra with properties as in our main theorem, see, e.g., \cite[Remark 7.2]{BBD1}. The example \cite[Example 7.5]{BBD1} is, however, not included in the
journal publications \cite{BBD3, BBD2} of the arXiv preprint \cite{BBD1}.
\end{rem}

\section*{Acknowledgments}
This research was funded by the Austrian Science Fund (FWF): Grant-DOI 10.55776/P33811, Grant-DOI 10.55776/PAT2545623 and Grant-DOI 10.55776/J4555. For open
access purposes, the authors have applied a CC BY public copyright license to any author-accepted manuscript version arising
from this submission.

\end{document}